\newcommand{\graa}{\tikz\draw[fill=black!100,thick] (0,0)circle(0.025)--(0,-0.3)circle(0.05);} 
\newcommand{\graab}{\tikz\draw[fill=black!100,thick] (0,0)circle(0.025)--(0,-0.3)circle(0.025);} 
\newcommand{\grbb}{\tikz\draw[fill=black!100,thick](0,-0.3)circle(0.05)--(0,0)circle(0.025)--(0.3,0)circle(0.025);} 
\newcommand{\grcc}{\tikz\draw[fill=black!100,thick](0,-0.3)circle(0.025)--(0,0)circle(0.05)--(0.3,0)circle(0.025);} 
\newcommand{\gree}{\tikz\draw[fill=black!100,thick](0,-0.3)circle(0.025)--(0,0)circle(0.05)--(0.3,0)circle(0.025)--(0,-0.3)circle(0.025);} 
\newcommand{\grff}{\tikz\draw[fill=black!100,thick](0,-0.3)circle(0.025)--(0,0)circle(0.05)--(0.3,0)circle(0.025)--(0.3,-0.3)circle(0.025)--(0,-0.3)circle(0.025);} 
\newcommand{\grgg}{\tikz\draw[fill=black!100,thick](0,-0.3)circle(0.025)--(0,0)circle(0.05)--(0.3,0)circle(0.025)--(0,-0.3)circle(0.025)--(0.3,0)circle(0.025)--(0.3,-0.3)circle(0.025)--(0,-0.3)circle(0.025);} 
\newcommand{\grhh}{\tikz\draw[fill=black!100,thick](0,-0.3)circle(0.05)--(0,0)circle(0.025)--(0.3,0)circle(0.025)--(0,-0.3)circle(0.025)--(0.3,0)circle(0.025)--(0.3,-0.3)circle(0.025)--(0,-0.3)circle(0.025);} 
\newcommand{\grjj}{\tikz\draw[fill=black!100,thick](0,-0.3)circle(0.025)--(0,0)circle(0.025)--(0.3,0)circle(0.025)--(0,-0.3)circle(0.025)--(0.3,0)circle(0.025)--(0.3,-0.3)circle(0.025);} 
\newcommand{\grkk}{\tikz\draw[fill=black!100,thick](0,-0.3)circle(0.05)--(0,0)circle(0.025)--(0.3,0)circle(0.025)--(0,-0.3)circle(0.025)--(0.3,0)circle(0.025)--(0.3,-0.3)circle(0.025);} 
\newcommand{\grll}{\tikz\draw[fill=black!100,thick](0,-0.3)circle(0.025)--(0,0)circle(0.025)--(0.3,0)circle(0.05)--(0,-0.3)circle(0.025)--(0.3,0)circle(0.025)--(0.3,-0.3)circle(0.025);} 
\newcommand{\grmm}{\tikz\draw[fill=black!100,thick](0,-0.3)circle(0.025)--(0,0)circle(0.025)--(0.3,0)circle(0.025)--(0,-0.3)circle(0.025)--(0.3,0)circle(0.025)--(0.3,-0.3)circle(0.05);} 
\newcommand{\groo}{\tikz\draw[fill=black!100,thick](0,-0.3)circle(0.025)--(0,0)circle(0.025)--(0.3,0)circle(0.025)--(0.3,-0.3)circle(0.05);} 
\newcommand{\grpp}{\tikz\draw[fill=black!100,thick](0,-0.3)circle(0.025)--(0,0)circle(0.025)--(0.3,0)circle(0.05)--(0.3,-0.3)circle(0.025);} 
\newcommand{\grqq}{\tikz\draw[fill=black!100,thick](0,-0.3)circle(0.025)--(0,0)circle(0.025)--(0.3,0)circle(0.05)--(0.3,-0.3)circle(0.025)--(0,0)circle(0.025)--(0.3,0)circle(0.025)--(0,-0.3)circle(0.025)--(0.3,-0.3)circle(0.025);} 
\newcommand{\grtt}{\tikz\draw[fill=black!100,thick](0,-0.3)circle(0.025)--(0,0)circle(0.05)--(0.3,0)circle(0.025)--(0,0)circle(0.025)--(0.3,-0.3)circle(0.025);} 
\newcommand{\gruu}{\tikz\draw[fill=black!100,thick](0,-0.3)circle(0.05)--(0,0)circle(0.025)--(0.3,0)circle(0.025)--(0,0)circle(0.025)--(0.3,-0.3)circle(0.025);} 
\newcommand{\griin}{
\begin{tikzpicture}
\draw[fill=black] (-3,-3) circle (1.5pt);
\draw[fill=black] (-2,-2) circle (1.5pt);
\draw[fill=black] (-1,-1) circle (1.5pt);
\draw[fill=black] (0,0) circle (1.5pt);
\draw[fill=black] (1,-1) circle (1.5pt);
\draw[fill=black] (2,-2) circle (1.5pt);
\draw[fill=black] (3,-3) circle (1.5pt);
\draw[fill=black] (-1,-3) circle (1.5pt);
\node at (-2.5,-2) {u};
\node at (1.5,-1) {v};
\draw[thick] (-3,-3) --(-2,-2)-- (-1,-3) --(-3,-3)--(-2,-2)-- (-1,-1)-- (0,0)--(1,-1)-- (-1,-1)--(1,-1)--(2,-2)--(3,-3);
\end{tikzpicture}}
\newcommand{\griio}{
\begin{tikzpicture}
\draw[fill=black] (0,0) circle (1.5pt);
\draw[fill=black] (-1,-1) circle (1.5pt);
\draw[fill=black] (-2,-2) circle (1.5pt);
\draw[fill=black] (0,-2) circle (1.5pt);
\draw[fill=black] (2,-2) circle (1.5pt);
\draw[fill=black] (1,-1) circle (1.5pt);
\draw[fill=black] (-3,-1) circle (1.5pt);
\draw[fill=black] (0,-3.4) circle (1.5pt);
\draw[fill=black] (2,0) circle (1.5pt);
\node at (1,0.3) {b};
\node at (-2.75,-1.5) {a};
\draw[thick] (-2,-2)--(0,-2)--(0,-3.4)--(0,-2)--(2,-2)--(1,-1)--(2,0)--(0,0)--(1,-1) --(0,0)--(-1,-1)--(-3,-1)--(-2,-2)--(-1,-1);
\end{tikzpicture}}
\newcommand{\griinb}{
\begin{tikzpicture}[scale=0.3]
\draw[fill=black] (0,0) circle (3pt);
\draw[fill=black] (0.577,1) circle (3pt);
\draw[fill=black] (1.155,2) circle (6pt);
\draw[fill=black] (2,3.464) circle (3pt);
\draw[fill=black] (2.845,2) circle (3pt);
\draw[fill=black] (3.423,1) circle (3pt);
\draw[fill=black] (1,0) circle (3pt);
\draw[thick] (0,0) --(1,0)-- (0.577,1)--(0,0) --(1.155,2)-- (2.845,2)-- (1.155,2)--(2,3.464)-- (2.845,2)--(3.423,1);
\end{tikzpicture}}
\newcommand{\griinc}{
\begin{tikzpicture}[scale=0.3]
\draw[fill=black] (0,0) circle (3pt);
\draw[fill=black] (0.577,1) circle (3pt);
\draw[fill=black] (1.155,2) circle (6pt);
\draw[fill=black] (2,3.464) circle (3pt);
\draw[fill=black] (2.845,2) circle (3pt);
\draw[fill=black] (1,0) circle (3pt);
\draw[thick] (0,0) --(1,0)-- (0.577,1)--(0,0) --(1.155,2)-- (2.845,2)-- (1.155,2)--(2,3.464)-- (2.845,2);
\end{tikzpicture}}
\newcommand{\griind}{
\begin{tikzpicture}[scale=0.3]
\draw[fill=black] (0,0) circle (3pt);
\draw[fill=black] (0.577,1) circle (3pt);
\draw[fill=black] (1.155,2) circle (6pt);
\draw[fill=black] (2,3.464) circle (3pt);
\draw[fill=black] (2.845,2) circle (3pt);
\draw[fill=black] (1,0) circle (3pt);
\draw[thick] (0,0) --(1,0)-- (0.577,1)--(0,0) -- (1.155,2)--(2,3.464)-- (2.845,2);
\end{tikzpicture}}
\newcommand{\griine}{
\begin{tikzpicture}[scale=0.3]
\draw[fill=black] (0,0) circle (3pt);
\draw[fill=black] (0.577,1) circle (3pt);
\draw[fill=black] (1.155,2) circle (6pt);
\draw[fill=black] (2,3.464) circle (3pt);
\draw[fill=black] (2.845,2) circle (3pt);
\draw[fill=black] (3.423,1) circle (3pt);

\draw[thick] (0,0) --(0.577,1) --(1.155,2)-- (2.845,2)-- (1.155,2)--(2,3.464)-- (2.845,2)--(3.423,1);
\end{tikzpicture}}
\newcommand{\griinf}{
\begin{tikzpicture}[scale=0.3]
\draw[fill=black] (0,0) circle (3pt);
\draw[fill=black] (0.577,1) circle (3pt);
\draw[fill=black] (1.155,2) circle (6pt);
\draw[fill=black] (2,3.464) circle (3pt);
\draw[fill=black] (2.845,2) circle (3pt);
\draw[thick] (0,0) --(0.577,1) --(1.155,2)-- (2.845,2)-- (1.155,2)--(2,3.464)-- (2.845,2);
\end{tikzpicture}}
\newcommand{\griing}{
\begin{tikzpicture}[scale=0.3]
\draw[fill=black] (0,0) circle (3pt);
\draw[fill=black] (0.577,1) circle (6pt);
\draw[fill=black] (1.155,2) circle (3pt);
\draw[fill=black] (2,3.464) circle (3pt);
\draw[fill=black] (2.845,2) circle (3pt);
\draw[thick] (0,0) --(0.577,1) --(1.155,2)-- (2.845,2)-- (1.155,2)--(2,3.464)-- (2.845,2);
\end{tikzpicture}}
\newcommand{\griinh}{
\begin{tikzpicture}[scale=0.3]
\
\draw[fill=black] (0.577,1) circle (3pt);
\draw[fill=black] (1.155,2) circle (3pt);
\draw[fill=black] (2,3.464) circle (6pt);
\draw[fill=black] (2.845,2) circle (3pt);
\draw[fill=black] (3.423,1) circle (3pt);

\draw[thick] (0.577,1) --(1.155,2)-- (2.845,2)-- (1.155,2)--(2,3.464)-- (2.845,2)--(3.423,1);
\end{tikzpicture}}
\newcommand{\griini}{
\begin{tikzpicture}[scale=0.3]
\
\draw[fill=black] (0.577,1) circle (3pt);
\draw[fill=black] (1.155,2) circle (3pt);
\draw[fill=black] (2,3.464) circle (3pt);
\draw[fill=black] (2.845,2) circle (6pt);

\draw[thick] (0.577,1) --(1.155,2)-- (2.845,2)-- (1.155,2)--(2,3.464)-- (2.845,2);
\end{tikzpicture}}
\newcommand{\griinj}{
\begin{tikzpicture}[scale=0.3]
\
\draw[fill=black] (0.577,1) circle (6pt);
\draw[fill=black] (1.155,2) circle (3pt);
\draw[fill=black] (2,3.464) circle (3pt);
\draw[fill=black] (2.845,2) circle (3pt);

\draw[thick] (0.577,1) --(1.155,2)-- (2.845,2)-- (1.155,2)--(2,3.464)-- (2.845,2);
\end{tikzpicture}}
\newcommand{\griink}{
\begin{tikzpicture}[scale=0.3]
\
\draw[fill=black] (0.577,1) circle (3pt);
\draw[fill=black] (1.155,2) circle (3pt);
\draw[fill=black] (2,3.464) circle (6pt);
\draw[fill=black] (2.845,2) circle (3pt);
\draw[fill=black] (3.423,1) circle (3pt);

\draw[thick] (0.577,1) --(1.155,2)--(2,3.464)-- (2.845,2)--(3.423,1);
\end{tikzpicture}}
\newcommand{\griinl}{
\begin{tikzpicture}[scale=0.3]
\
\draw[fill=black] (0.577,1) circle (3pt);
\draw[fill=black] (1.155,2) circle (3pt);
\draw[fill=black] (2,3.464) circle (6pt);
\draw[fill=black] (2.845,2) circle (3pt);

\draw[thick] (0.577,1) --(1.155,2)--(2,3.464)-- (2.845,2);
\end{tikzpicture}}
\newcommand{\griinm}{
\begin{tikzpicture}[scale=0.3]
\
\draw[fill=black] (0.577,1) circle (3pt);
\draw[fill=black] (1.155,2) circle (3pt);
\draw[fill=black] (2,3.464) circle (6pt);

\draw[thick] (0.577,1) --(1.155,2)--(2,3.464);
\end{tikzpicture}}
\newcommand{\griinn}{
\begin{tikzpicture}[scale=0.3]
\
\draw[fill=black] (0.577,1) circle (3pt);
\draw[fill=black] (1.155,2) circle (6pt);

\draw[thick] (0.577,1) --(1.155,2);
\end{tikzpicture}}
\newcommand{\griino}{
\begin{tikzpicture}[scale=0.3]

\draw[fill=black] (1.155,2) circle (6pt);
\draw[fill=black] (2,3.464) circle (3pt);
\draw[fill=black] (2.845,2) circle (3pt);

\draw[thick] (1.155,2)-- (2.845,2)-- (1.155,2)--(2,3.464)-- (2.845,2);
\end{tikzpicture}}
\DeclarePairedDelimiter{\card}{\lvert}{\rvert} 
\DeclareMathOperator{\Aut}{Aut} 
\DeclareMathOperator{\Orbit}{Orbit} 
\theoremstyle{plain} 
\newtheorem{thm}{Theorem}[section]
\newtheorem{lem}[thm]{Lemma}
\newtheorem{prop}[thm]{Proposition}
\theoremstyle{definition}
\newtheorem{ex}[thm]{Example}
\theoremstyle{remark}
\newtheorem{rem}[thm]{Remark}
\title{A refinement of Kelly's lemma for graph reconstruction for counting
  rooted subgraphs} %
\author{Deisiane Lopes Gonçalves\thanks{Supported by CAPES, Brasil. E-mail:
    deisy.lopes28@gmail.com}\, and\, Bhalchandra D. Thatte\thanks{Supported by
    FAPEMIG, MG, Brasil, 2023-2025, Process no.  APQ-02018-22. E-mail:
    thatte@ufmg.br} \\
  Departamento de Matemática,\\
  Universidade Federal de Minas Gerais, \\
  Belo Horizonte, Brasil }%
\date{\today}
\begin{document}

\maketitle

\begin{abstract}
  Kelly's lemma is a basic result on graph reconstruction. It states that given
  the deck of a graph $G$ on $n$ vertices, and a graph $F$ on fewer than $n$
  vertices, we can count the number of subgraphs of $G$ that are isomorphic to
  $F$. Moreover, for a given card $G-v$ in the deck, we can count the number of
  subgraphs of $G$ that are isomorphic to $F$ and that contain $v$. We consider
  the problem of refining the lemma to count rooted subgraphs such that the root
  vertex coincides the deleted vertex. We show that such counting is not
  possible in general, but a multiset of rooted subgraphs of a fixed height $k$
  can be counted if $G$ has radius more than $k$. We also prove a similar result
  for the edge reconstruction problem.
\end{abstract}

\section{Introduction} The graphs in this paper are simple and finite. We denote
the vertex set of a graph $G$ by $V(G)$, the edge set by $E(G)$, and write
$v(G) = \card{V(G)}$ and $e(G) = \card{E(G)}$. The deck of a graph $G$, denoted
by $D(G)$ is the multiset of unlabelled vertex-deleted subgraphs of
$G$. Similarly, the edge deck of $G$, denoted by $ED(G)$, is the multiset of
unlabelled edge-deleted subgraphs of $G$. The well known reconstruction
conjecture of Kelly (1942) and Ulam (1960) (see
\cite{kelly.1942,kelly.1957,ulam.1960}) states that a graph on at least 3
vertices is determined up to isomorphism by its vertex-deck. The analogous edge
reconstruction conjecture of Harary \cite{harary.1964} states that a graph with
at least 4 edges is determined up to isomorphism by its edge-deck. Both these
conjectures are open till today. We refer to a survey by Bondy \cite{bondy.1991}
for more details and for the notions not defined here.

For a graph $F$, we denote by $s(F,G)$ the number of subgraphs of $G$ that are
isomorphic to $F$, and by $i(F,G)$, we denote the number of induced subgraphs of
$G$ that are isomorphic to $F$. Let $v \in V(G)$. Let $s(F,G^v)$ ($i(F,G^v)$)
denote the number of subgraphs (respectively, induced subgraphs) of $G$ that are
isomorphic to $F$ and that contain the vertex $v$. Clearly, we have
$s(F,G^v) = s(F,G) - s(F,G-v)$ and $i(F,G^v) = i(F,G) - i(F,G-v)$. A graph $G$
(or a property of $G$) is said to be reconstructible (or edge reconstructible)
if it is determined up to isomorphism (or the property is determined uniquely)
by its deck (respectively, edge deck). We have the following basic lemma for the
vertex reconstruction problem.
 
\begin{lem} \label{lem-kelly} (Kelly's Lemma) If a graph $F$ is such that
  $v(F)< v(G)$, then
  \begin{enumerate}
  \item $s(F,G)$ and $i(F,G)$  are reconstructible;
  \item $s(F,G^v)$ and $i(F,G^v)$ are reconstructible.
  \end{enumerate}
\end{lem}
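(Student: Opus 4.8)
The plan is to prove both parts by a double-counting argument over the cards in the deck. For part (1), fix a graph $F$ with $v(F) < v(G)$ and count, in two ways, the number of pairs $(H,v)$ where $v \in V(G)$ and $H$ is a subgraph of $G$ isomorphic to $F$ with $v \notin V(H)$. Grouping the pairs by the first coordinate of the deletion, this count equals $\sum_{v \in V(G)} s(F, G-v)$, since a copy $H$ of $F$ that avoids $v$ survives intact in $G-v$. Grouping instead by $H$, each fixed copy of $F$ in $G$ is counted once for every vertex of $G$ outside $V(H)$, i.e.\ exactly $v(G) - v(F)$ times, so the count also equals $\bigl(v(G) - v(F)\bigr)\, s(F,G)$. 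Because $v(F) < v(G)$, the coefficient $v(G) - v(F)$ is nonzero, and therefore
\[
  s(F,G) = \frac{1}{v(G) - v(F)} \sum_{v \in V(G)} s(F, G-v).
\]
The right-hand side is determined by the deck: $v(G)$ is one more than the vertex count of any card, $v(F)$ is given, and $s(F,\cdot)$ is an isomorphism invariant, so each term $s(F, G-v)$ is computable from the corresponding unlabelled card. Repeating the argument verbatim with ``subgraph'' replaced by ``induced subgraph'' (a copy of $F$ induced in $G$ is induced in $G-v$ precisely when $v$ avoids it) yields reconstructibility of $i(F,G)$.

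For part (2), I would combine part (1) with the two identities already recorded in the text, $s(F, G^v) = s(F,G) - s(F, G-v)$ and $i(F, G^v) = i(F,G) - i(F, G-v)$. Given a designated card $G-v$ in the deck, the quantity $s(F, G-v)$ (respectively $i(F, G-v)$) is read directly off that card, while $s(F,G)$ (respectively $i(F,G)$) has just been reconstructed in part (1); subtracting gives $s(F, G^v)$ and $i(F, G^v)$ from the deck together with the designated card.

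There is no serious obstacle: the whole argument is the single double-count above, and the only point needing care is the hypothesis $v(F) < v(G)$, which is exactly what makes the multiplicity $v(G) - v(F)$ invertible so that the summed equation can be solved for $s(F,G)$. (When $v(F) = v(G)$ the right-hand sum is identically zero and yields no information about $s(F,G)$, and when $v(F) > v(G)$ the quantities vanish trivially but are not recoverable by this method — this boundary behaviour is what makes the refinement considered later in the paper delicate.)
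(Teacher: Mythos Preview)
Your argument is correct and is exactly the classical double-counting proof of Kelly's Lemma. Note that the paper itself does not supply a proof of this lemma at all: it is quoted as a known background result, so there is no ``paper's own proof'' to compare against, but what you have written is the standard derivation one finds in the literature (e.g., Bondy's survey cited in the paper).
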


In this paper, we are interested in refining the second part of the above lemma
in the following sense. If a subgraph of $G$ isomorphic to $F$ contains a vertex
$v$, then that subgraph may appear in one of possibly many {\em configurations},
depending on which vertex of $F$ coincides with $v$. For example, if $F$ is a
path of length 2, say a path with edges $ab$ and $bc$, then a subgraph
isomorphic to $F$ that contains vertex $v$ of $G$ may have either vertex $b$
coinciding with $v$ or vertex $a$ (or equivalently, vertex $c$) coinciding with
$v$. Thus we may write $s(F,G^v) = s(F^a,G^v) + s(F^b,G^v)$, where $s(F^a,G^v)$
and $s(F^b,G^v)$ are the contributions of the two configurations in which $F$
occurs as a subgraph in $G$. Our goal is to separately enumerate these
contributions to $s(F,G^v)$ (or $i(F,G^v)$), with the hope that such an
enumeration would help in proving reconstruction results for classes of graphs
or simplify the proofs of known results.

We now make these notions precise. First we define isomorphism of rooted
graphs. Let $F_1^x$ and $F_2^y$ be graphs $F_1$ and $F_2$, considered to be
rooted at $x$ and $y$, respectively. We say that they are \emph{isomorphic} if
there is an isomorphism $f\colon V(F_1)\to V(F_2)$ such that $f(x) = y$. For a
graph $F$ rooted at $x$, we denote by $s(F^x, G^v)$ the number of rooted
subgraphs of $G^v$ that are isomorphic to $F^x$ such that the root $x$ of the
subgraph coincides with $v$, and by $i(F^x, G^v)$ the number of induced rooted
subgraphs of $G^v$ that are isomorphic to $F^x$ such that the root $x$ of the
subgraph coincides with $v$.

We say that two vertices $u$ and $v$ in a graph $F$ are \emph{similar}, written
as $u\approx v$, if there exists $g \in \Aut(F)$ such that $v = g(u)$. Note that
$\approx$ is an equivalence relation on $V(F)$, and the equivalence classes of
the relation are the orbits of the action of $\Aut(F)$ on $V(F)$.

\begin{lem}
  Let $V_i,\cdots,V_s$ be the orbits of the action of $\Aut(F)$ on $V(F)$. Let
  $U \subseteq V(F)$ consist of one representative vertex from each $V_i$.  Then
  we have
  \begin{equation}
    \label{eq:i}
    s(F,G^v)=\sum_{u \in U} s(F^u, G^v)
  \end{equation}
  and
  \begin{equation}
    i(F,G^v)=\sum_{u \in U} i(F^u, G^v).
 \end{equation}
\end{lem}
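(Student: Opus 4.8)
The plan is to prove both identities by a double-counting argument that tracks, for each subgraph of $G$ isomorphic to $F$ and containing $v$, which vertex of $F$ is allowed to sit on $v$. I would set up the following notation: let $\mathcal{H}$ denote the set of subgraphs $H\subseteq G$ with $v\in V(H)$ and $H\cong F$, so that by definition $s(F,G^v)=\card{\mathcal{H}}$; and for each $H\in\mathcal{H}$ let $\mathrm{Iso}(F,H)$ be the (nonempty) set of graph isomorphisms $F\to H$. The key object is the set $W(H)=\{\phi^{-1}(v):\phi\in\mathrm{Iso}(F,H)\}\subseteq V(F)$, the set of vertices of $F$ that can be mapped onto $v$ by some embedding realising $H$.

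First I would establish the central lemma: \emph{$W(H)$ is a single orbit of the action of $\Aut(F)$ on $V(F)$}. Pick any $\psi\in\mathrm{Iso}(F,H)$ and set $w=\psi^{-1}(v)$. If $\phi\in\mathrm{Iso}(F,H)$ then $g:=\psi^{-1}\phi\in\Aut(F)$ and $\phi^{-1}(v)=g^{-1}(\psi^{-1}(v))=g^{-1}(w)$, so every element of $W(H)$ lies in the orbit of $w$; conversely, for any $g\in\Aut(F)$ the map $\psi g^{-1}$ is again in $\mathrm{Iso}(F,H)$ and sends $g(w)$ to $v$, so the whole orbit of $w$ is contained in $W(H)$. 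Hence $W(H)=V_i$ for exactly one index $i$.

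Next I would relate the right-hand side to this. For $u\in V(F)$, the subgraph $H\in\mathcal{H}$ is counted by $s(F^u,G^v)$ precisely when $H^v\cong F^u$, i.e.\ when there is an isomorphism $\phi\colon F\to H$ with $\phi(u)=v$, i.e.\ when $u\in W(H)$; note this contributes $0$ or $1$ per $H$ since $s(F^u,G^v)$ counts subgraphs, not embeddings. Therefore
\begin{equation*}
  \sum_{u\in U}s(F^u,G^v)=\sum_{u\in U}\card{\{H\in\mathcal{H}:u\in W(H)\}}=\sum_{H\in\mathcal{H}}\card{U\cap W(H)}.
\end{equation*}
Since $W(H)$ is a single orbit and $U$ contains exactly one representative of each orbit, $\card{U\cap W(H)}=1$ for every $H\in\mathcal{H}$, so the right-hand side equals $\card{\mathcal{H}}=s(F,G^v)$. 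For the induced version, I would remark that the identical argument goes through verbatim with ``subgraph'' replaced by ``induced subgraph'' throughout, since $W(H)$ and the orbit computation depend only on the abstract isomorphism type of $F$ and of $H$, not on how $H$ sits inside $G$.

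The argument is essentially bookkeeping, so there is no deep obstacle; the one point demanding care is the distinction between counting \emph{subgraphs} and counting \emph{embeddings} — it is crucial that $s(F^u,G^v)$ contributes at most $1$ per subgraph $H$, which is exactly why the orbit $W(H)$ (rather than its cardinality weighted by stabiliser sizes) is the right invariant and why choosing one representative per orbit, rather than summing over all of $V(F)$, yields the clean identity.
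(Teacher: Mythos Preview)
Your proof is correct and carefully done. The paper itself states this lemma without proof, treating it as an immediate consequence of the definitions of rooted isomorphism and similarity; your argument spells out precisely the orbit computation that the authors leave implicit, including the point that for a fixed subgraph $H$ the set of vertices of $F$ that can land on $v$ is a single $\Aut(F)$-orbit, which is exactly why one representative per orbit suffices.
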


We show with examples that individual contributions $s(F^u,G^v)$ are not
reconstructible in general. Our examples make use of the notion of
pseudo-similarity.  Two vertices $u$ and $v$ in a graph $G$ are
\textit{pseudo-similar} if the vertex-deleted subgraphs $G-v$ and $G-u$ are
isomorphic but $u$ and $v$ are not similar.

\begin{ex} \label{ex:1} Figure~\ref{fig:kocay} shows a graph $G$ with
  pseudo-similar vertices $u$ and $v$; this example is from \cite{gk1982}. We
  consider a hypomorphism $f\colon V(G) \to V(G)$ such that $f(u) = v$ and
  $f(v) = u$.

  Now let $F$ be the graph $\grjj$. Observe that the vertex set of $F$
  partitions in to three orbits, namely, the vertex $x$ of degree 1, the two
  vertices of degree 2 (which are similar, so let one of them be $y$), and the
  vertex $z$ of degree 3. We have
  \begin{align*}
    s(F^{x}, G^v)=0 & \text{ and } s(F^{x}, G^u)=1 \\
    s(F^{y}, G^v)=1 & \text{ and } s(F^{y}, G^u)=0 \\
    s(F^{z}, G^v)=1 & \text{ and } s(F^{z}, G^u)=1
  \end{align*}

  These observations are consistent with our calculations in Section
  \ref{sec:calculations}, where we show that $s(F^{x}, G^w) + s(F^{y}, G^w)$ is
  reconstructible for each $w \in V(G)$.  Similarly, if $F = P_4$ (the path on 4
  vertices) and $x$ is a vertex of degree 1 in $F$, then $s(F^x,G^v)=3$ and
  $s(F^x,G^u) = 3$, but $i(F^x,G^v)=2$ and $i(F^x,G^u)=1$.

\end{ex}

\begin{figure}[h]
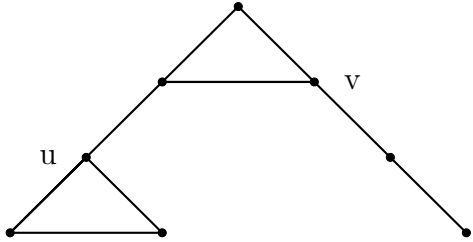
 \centering \griin
  \caption{An example from \cite{gk1982} of a graph $G$ with pseudo-similar
    vertices $u$ and $v$.}
    \label{fig:kocay}
\end{figure}

The limitation described here is similar to the degree-pair and degree-pair
sequence reconstruction for digraphs. We know that in general the degree-pair of
a deleted vertex cannot be reconstructed, but the degree-pair sequence of the
graph can be reconstructed. See Manvel \cite{manvel.1973} and Stockmeyer
\cite{stockmeyer.1977}. So we ask if the multiset $\{G_k^v, v\in V(G)\}$ could
be reconstructed, where $G_k^v$ is the subgraph of $G$ rooted at $v$ induced by
the vertices at distance at most $k$ from $v$. We partially answer this question
in Section~\ref{sec-vrc}, and we prove a similar result for edge reconstruction
in Section~\ref{sec-erc}.

\section{Refining Kelly's lemma for vertex reconstruction} \label{sec-vrc}

\begin{lem}
  Let $x \in V(G)$. Denote by $\Orbit_G(x)$ the set of vertices of $G$ that are
  similar to $x$. We denote by $s(F^x, G)$ the number of rooted subgraphs of $G$
  that are isomorphic to $F^x$. We have
  \[
    s(F^x,G) = \card{\Orbit_F(x)}s(F,G).
  \]
\end{lem}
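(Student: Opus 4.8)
The plan is a straightforward double-counting argument. First I would unwind the definition: a rooted subgraph of $G$ isomorphic to $F^x$ is a pair $(H,w)$ where $H$ is a subgraph of $G$, $w\in V(H)$, and there is an isomorphism $F\to H$ carrying $x$ to $w$; thus $s(F^x,G)$ is the number of such pairs. Grouping these pairs according to the underlying unrooted subgraph $H$, it suffices to show that for each subgraph $H$ of $G$ with $H\cong F$, the number of vertices $w\in V(H)$ for which $(H,w)\cong F^x$ equals $\card{\Orbit_F(x)}$. Summing this common value over the $s(F,G)$ choices of $H$ then yields the formula.

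Next I would fix one isomorphism $\phi\colon F\to H$, which exists since $H\cong F$. As $\psi$ ranges over all isomorphisms $F\to H$, the composite $\phi^{-1}\psi$ ranges exactly over $\Aut(F)$; hence the set of possible root images $\{\psi(x):\psi\colon F\to H\text{ an isomorphism}\}$ equals $\{\phi(g(x)):g\in\Aut(F)\}=\phi(\Orbit_F(x))$. Therefore $(H,w)\cong F^x$ if and only if $w\in\phi(\Orbit_F(x))$, and since $\phi$ is a bijection the number of such $w$ is $\card{\phi(\Orbit_F(x))}=\card{\Orbit_F(x)}$, independently of the choice of $H$. This gives $s(F^x,G)=\card{\Orbit_F(x)}\,s(F,G)$.

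I do not expect a genuine obstacle here; the only points requiring care are that $s(F^x,G)$ counts rooted subgraphs as pairs $(H,w)$ (a labelled root with an unlabelled remainder) rather than isomorphism classes of rooted graphs, and that the count of admissible roots inside a fixed copy $H$ does not depend on which copy is chosen — which is precisely what the orbit computation establishes. An alternative phrasing would be to let $\Aut(F)$ act on the set of subgraph embeddings $F\hookrightarrow G$ and invoke orbit counting, but the direct argument above is shorter and self-contained.
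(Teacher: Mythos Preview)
Your argument is correct. The paper states this lemma without proof, treating it as an elementary observation, so there is no proof to compare against; your double-counting via pairs $(H,w)$ and the identification of admissible roots with $\phi(\Orbit_F(x))$ is exactly the standard justification and would serve perfectly well as the omitted proof.
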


The \emph{distance} between two vertices in a graph is the number of edges in a
shortest path between the two vertices.  The minimum among all the maximum
distances between a vertex and all other vertices is called the \emph{radius} of
the graph $G$, which we denote by $r(G)$.

Let $G$ be a connected graph. Let $v\in V(G)$. Let $G_k^v$ denote the subgraph
of $G$ rooted at $v$, induced by vertices at distance at most $k$ from $v$. Let
$S_k(G)$ denote the multiset $\{G_k^v, v\in V(G)\}$. It is known that if
$v(G) > 2$, then the degree sequence of $G$ is reconstructible; also for each
card $G-v$ in the deck, the degree of $v$ and the neighbourhood degree sequence
of $v$ are reconstructible. But we do not know if $S_1(G)$ is reconstructible,
and for a given card $G-v$, we cannot in general reconstruct $G^{v}_1, G^{v}_2$
as shown by the examples of graphs containing pseudo-similar vertices. The
following proposition partially answers the question of constructing $S_k(G)$.

\begin{prop} \label{pro:1} If $G$ is a connected graph with radius more that
  $k$, then $S_k(G)$ is reconstructible.
\end{prop}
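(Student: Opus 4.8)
The goal is to reconstruct the multiset $S_k(G) = \{G_k^v : v \in V(G)\}$ from the deck $D(G)$, under the hypothesis $r(G) > k$. The key observation is that since $G$ has radius more than $k$, \emph{no} ball of radius $k$ around a vertex covers all of $V(G)$; equivalently, for every $v \in V(G)$ there is a vertex at distance $> k$ from $v$, so in particular $G_k^v$ is a proper rooted subgraph and we should try to read it off from a single card. The natural candidate is this: given a card $G - w$, the rooted graph $G_k^v$ is recoverable for those $v$ that are "far" from $w$, since then the entire ball of radius $k$ around $v$ survives in $G - w$. More precisely, if $d_G(v,w) > k$ then $G_k^v$ appears as a rooted induced subgraph of $G - w$ (rooted at $v$), and $G - w$ "knows" that $v$ is its root because... — and here is the subtlety — we do not see the labels. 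So the actual strategy must be: count, over all cards and all candidate root vertices, rooted balls of radius $k$, and use an inclusion–exclusion / averaging argument in the spirit of Kelly's lemma to extract the multiset $S_k(G)$.

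**Key steps.** First I would set up the counting framework. For a fixed rooted graph $H^x$ with $v(H) \le $ something small, and for a vertex $v$ in a graph $J$, say $v$ is an \emph{$H^x$-center in $J$} if the subgraph of $J$ induced by the ball of radius $k$ around $v$, rooted at $v$, is isomorphic to $H^x$. Let $c(H^x, J)$ be the number of $H^x$-centers in $J$; note $S_k(G)$ is exactly the data of $c(H^x, G)$ for all rooted $H^x$. Second, I would relate $c(H^x, G)$ to the cards: a vertex $v$ is an $H^x$-center in $G$ if and only if, for a card $G - w$ with $d_G(w,v) > k$, $v$ is an $H^x$-center in $G - w$ (deleting a far vertex does not disturb the ball of radius $k$ around $v$), while if $d_G(w,v)\le k$ the ball can only shrink. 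Third — the heart of the argument — I would count pairs. Consider $\sum_{w} c(H^x, G-w)$: each $H^x$-center $v$ of $G$ is counted once for every $w$ with $d_G(w,v) > k$, i.e. $v(G) - \lvert B_k^G(v)\rvert$ times, where $B_k^G(v)$ is the ball of radius $k$; crucially, since $H^x$ determines $\lvert B_k^G(v)\rvert = v(H)$, this multiplicity is the \emph{same} constant $v(G) - v(H)$ for all $H^x$-centers of that isomorphism type. Meanwhile each vertex $v'$ of $G$ that is \emph{not} an $H^x$-center of $G$ but \emph{becomes} one in some $G - w$ contributes spurious terms, and these I must subtract; but such $v'$ has $d_G(w,v') \le k$, so the ball around $v'$ in $G$ strictly contains (as a rooted graph) more vertices, and one can organize these corrections by induction on $v(H)$ — reconstructing balls of smaller order first (the empty/small cases being trivial or handled by degree-sequence reconstruction), then peeling them off.

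**Main obstacle.** The delicate point is the bookkeeping in the inclusion–exclusion: when we delete a vertex $w$ inside the ball $B_k^G(v)$, the rooted ball $(G-w)_k^v$ is generally a different, smaller rooted graph, and several distinct vertices of $G$ may collapse to isomorphic rooted balls in the cards, so a naive count over-counts. The hypothesis $r(G) > k$ is used precisely to guarantee that the "honest" multiplicity $v(G) - v(H)$ is strictly positive (so we can divide and recover $c(H^x,G)$), and that every ball we care about is witnessed in at least one card. I expect the write-up will require a careful induction on $v(H)$ (from largest possible down to smallest, or the reverse), at each stage using the already-reconstructed counts $c((H')^{x'}, G)$ for all rooted $H'$ with $(H')^{x'}$ "properly containing" $H^x$ in the appropriate sense, to cancel the contamination coming from deleted vertices inside the ball. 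Establishing that this cancellation is exact — i.e. that the rooted-subball containment poset is graded correctly and the Möbius-type inversion closes — is the real content and the step most likely to need care.
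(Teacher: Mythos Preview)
Your approach is viable and genuinely different from the paper's.  The paper never double-counts over cards at all: instead it observes that for any candidate ball $A^u$ (radius $\le k$ from its root) the total rooted-subgraph count $s(A^u,G)=\lvert\Orbit_A(u)\rvert\,s(A,G)$ is reconstructible directly by Kelly's lemma (since $v(A)<v(G)$), and that every rooted copy of $A^u$ in $G$ with root $v$ lies inside $G_k^v$.  This yields
\[
  s(A^u,G)=\sum_{B^w} s(A^u,B^w)\,n(B^w),
\]
a linear system that is upper-triangular when the candidate balls are ordered by decreasing edge count, so the multiplicities $n(B^w)$ drop out by back-substitution.  Your route computes $\sum_w c(H^x,G-w)$ from the cards and then strips off the contamination; both end in a triangular inversion, but the paper's pivot quantity $s(A^u,G)$ is handed to it for free by Kelly, whereas you build yours by hand.

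The one real gap in your sketch is the step you flag yourself: you need that the correction term is a function only of the already-known multiplicities $c(B^y,G)$ for $v(B)>v(H)$.  This follows from a lemma you have not stated: if $w\in V(G_k^v)\setminus\{v\}$ then $(G-w)_k^v=(G_k^v-w)_k^v$, i.e.\ the perturbed ball depends only on the original ball $G_k^v$ and the position of $w$ inside it, not on the rest of $G$.  (Any shortest $v$--$u$ path in $G-w$ of length $\le k$ has all its internal vertices at distance $<k$ from $v$ in $G$, hence stays inside $G_k^v-w$.)  With this in hand your correction becomes $\sum_{B^y}t(H^x,B^y)\,c(B^y,G)$ with $t(H^x,B^y)=\lvert\{z\in V(B)\setminus\{y\}:(B-z)_k^y\cong H^x\}\rvert$, and since $v(B)>v(H)$ whenever $t(H^x,B^y)\ne 0$, the induction runs from \emph{largest} $v(H)$ downward (not ``smaller order first'' as you first wrote).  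Once you add that lemma, your argument closes; the paper's argument is shorter because it outsources this locality to the containment $A^u\subseteq G_k^v$ and never has to analyse what deleting a nearby vertex does to a ball.
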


\begin{proof} If the radius $r(G)$ of $G$ is more than $k$ then for all
  $v\in V(G)$, the graph $G^v_k$ has fewer than $v(G)$ vertices. Hence we claim
  that $S_k(G)$ is a subset of $\bigcup_{v\in V(G)} S_k(G-v)$ (taken as a
  multiset union), and the latter is reconstructible. Let $S$ be the set of
  distinct rooted graphs in $\bigcup_{v\in V(G)} S_k(G-v)$. In the following, we
  determine which members of $S$ are in $S_k(G)$ along with their
  multiplicities.

  Let $A^u \in S$. Let $n(A^u)$ be the number of vertices $v\in V(G)$ such that
  $G_k^v \cong A^u$. We want to prove that $n(A^u)$ is reconstructible. We have
  \begin{equation}\label{eq:a}
    s(A^u,G)=\displaystyle\sum_{B^w \in S}s(A^u,B^w)n(B^w).
  \end{equation}

  Since $s(A^u, G)=|\Orbit_A(u)|s(A,G)$ and $v(A) < v(G)$, we can reconstruct
  $s(A^u, G)$.  If $A^u$ is a maximal element in $S$ (i.e., $s(A^u,B^w)= 0$ for
  all $B^w \not\cong A^u$), then $s(A^u, G)=n(A^u)$. Now we order graphs in $S$
  as $A_1^{u_1}, A_2^{u_2}, \ldots$ so that $|E(A_i)|\geq |E(A_j)|$ for
  $i<j$. We can then solve Equation~\eqref{eq:a} recursively for each
  $A_i^{u_i}$ in the order $i=1, 2, \ldots$. Thus, we can reconstruct $S_k(G)$.
\end{proof}

We do not know if radius or diameter are in general reconstructible
parameters. But the result can be applied to bounded degree graphs, graphs
containing a vertex of degree 1, and possibly some other classes of graphs for
which estimates for the radius can be made from the deck.

\begin{ex}\label{ex:2} Let $G$ be the graph in
  Figure~\ref{fig:kocay}.  The rooted graphs $G_2^u$ and $G_2^v$ are not
  isomorphic. We reconstruct the multiset $S_2(G)$ by the Equation~\eqref{eq:a}
  counting $n(G_2^w)$ where $w\in V(G)$ (see Section \ref{sec:calculations}).

  Now, let $F$ be the graph \grjj\hspace{0.05cm} with a root vertex $y$ of
  degree 2. We have seen that the parameter $s(F^y,G^v)$ is not
  reconstructible. But, since $S_2(G)$ is reconstructible, the multiset
  $\{s(F^y,G^v)\mid v\in V(G)\}$ is reconstructible.
\end{ex}

\section{Refining Kelly's lemma for edge reconstruction} \label{sec-erc}
\begin{lem} \label{lem-kelly-edge } (Kelly's Lemma - edge version) If a graph
  $F$ is such that $e(F)< e(G)$, then $s(F,G)$ is 
  edge-reconstructible.
\end{lem}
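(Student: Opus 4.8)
The plan is to mimic the proof of the vertex version of Kelly's lemma (Lemma~\ref{lem-kelly}), using the standard double-counting argument but with edge-deleted subgraphs in place of vertex-deleted ones. Fix a graph $F$ with $e(F) < e(G)$. I would count, in two ways, the number of pairs $(H, e)$ where $H$ is a subgraph of $G$ isomorphic to $F$ and $e$ is an edge of $G$ not in $E(H)$. On one hand, each copy $H$ of $F$ in $G$ is paired with each of the $e(G) - e(F)$ edges of $G$ outside $H$; since $e(F) < e(G)$ this factor is a fixed positive integer, so the total number of such pairs is $(e(G) - e(F))\, s(F,G)$. On the other hand, grouping the pairs by the edge $e$, the number of copies of $F$ avoiding a fixed edge $e$ is exactly $s(F, G - e)$, so the total is $\sum_{e \in E(G)} s(F, G-e)$.

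The next step is to observe that everything on the ``grouped by $e$'' side is computable from the edge deck: the edge deck $ED(G)$ is precisely the multiset $\{\, G - e : e \in E(G)\,\}$ of unlabelled edge-deleted subgraphs, and $s(F, G-e)$ depends only on the isomorphism type of $G - e$. Hence $\sum_{e \in E(G)} s(F, G-e) = \sum_{J \in ED(G)} s(F, J)$ is determined by $ED(G)$. Also $e(G)$ is edge-reconstructible (it is one more than the number of edges of any card), and $e(F)$ is known, so the coefficient $e(G) - e(F)$ is known and nonzero. Solving the counting identity for $s(F,G)$ then gives
\[
  s(F,G) = \frac{1}{e(G) - e(F)} \sum_{J \in ED(G)} s(F, J),
\]
which exhibits $s(F,G)$ as a function of $ED(G)$ alone, completing the proof.

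There is essentially no hard part here; the only points requiring a word of care are that $e(G) - e(F) > 0$ (so that the division is legitimate), which is exactly the hypothesis $e(F) < e(G)$, and the tacit fact that $s(F, \cdot)$ is isomorphism-invariant so that it is well defined on the unlabelled multiset $ED(G)$. I would also remark, in passing, that the same argument with induced subgraphs fails: deleting an edge can turn a non-edge into... (it cannot), but more to the point, an induced copy of $F$ in $G - e$ need not be an induced copy in $G$, so the grouping step breaks down — this is why only $s(F,G)$, and not $i(F,G)$, appears in the statement. If the paper intends to go on to refine this lemma for rooted subgraphs (paralleling Section~\ref{sec-vrc}), this identity is the base case on which the edge-radius analogue of Proposition~\ref{pro:1} will be built.
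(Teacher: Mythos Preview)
Your argument is correct and is exactly the standard double-counting proof of the edge version of Kelly's lemma. The paper itself states Lemma~\ref{lem-kelly-edge } as a known result without supplying a proof, so there is nothing to compare against; your write-up would serve perfectly well as the omitted justification.
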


Let $G$ be a graph. Let $e_1$ and $e_2$ be two edges in $G$. We define the
\textit{distance} between $e_1$ and $e_2$ to be the number of edges on a minimal
path that contains $e_1$ and $e_2$. If $e_1$ and $e_2$ are in different
components, then we define the distance between them to be infinity.

Let $e\in E(G)$. Let $G_{k}^e$ denote the subgraph of $G$ rooted at $e$ (i.e.,
with a distinguished edge $e$) induced by edges at distance at most $k$ from
$e$. Let $T_k(G)$ denote the multiset $\{G_{k}^e, e\in E(G)\}$.

Edges $a$ and $b$ of a graph $G$ are {\em similar} if there exists an
automorphism of $G$ that maps the ends vertices of $a$ to the ends vertices of
$b$, and are {\em pseudo-similar} if $G-a\cong G-b$, but $a$ is not similar to
$b$ in $G$.
\begin{ex} Let $G$ be the graph in Figure~\ref{fig:poirier}.  The edges $a$ and
  $b$ are pseudo-similar. Let $G^{a}_4$ and $G^{b}_4$ be two elements in
  $T_4(G)$, we have $G_4^a \not \cong G_4^b$.
\end{ex}

\begin{figure}[h]
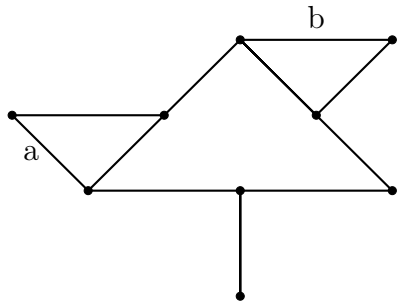

  \centering
  \griio
  \caption{An example of a pair of pseudo-similar edges $a$ and $b$ in a graph
    from Poirier \cite{poirier}.}
  \label{fig:poirier}
\end{figure}

\begin{prop}\label{prop:edge}
  If $G$ is a connected graph with radius more than $k > 1$, then $T_k(G)$ is
  edge-reconstructible.
\end{prop}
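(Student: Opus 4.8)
The plan is to follow the proof of Proposition~\ref{pro:1} almost verbatim, with edges and edge‑distance playing the role of vertices and distance. The first ingredient is the edge analogue of the geometric fact used there: if $r(G) > k$, then $G_k^e$ has strictly fewer than $e(G)$ edges, for every $e \in E(G)$. To see this I would take an endpoint $x$ of $e = xy$; since the eccentricity of $x$ is at least $r(G) > k$, there is a vertex $w$ with $d_G(x,w) = m \geq k+1$, and if $f$ is the last edge of a shortest $x$--$w$ path then a short computation with the triangle inequality (using that $\mathrm{dist}_G(e,f) \geq 2 + \min\{\, d_G(p,q) : p \in \{x,y\},\ q \in f \,\}$) gives $\mathrm{dist}_G(e,f) \geq m > k$, so $f \notin E(G_k^e)$. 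The same eccentricity argument also produces an edge $e'$ with $\mathrm{dist}_G(e,e') > k$; then $e' \notin E(G_k^e)$, and since every edge lying on a path of length at most $k$ through $e$ is itself within edge‑distance $k$ of $e$, no such path can use $e'$, so deleting $e'$ changes neither the set of edges within edge‑distance $k$ of $e$ nor the subgraph they induce. Hence $(G-e')_k^e = G_k^e$, which shows that $T_k(G)$ is a submultiset of $\bigcup_{e' \in E(G)} T_k(G-e')$, and the latter is computable from the edge deck $ED(G)$.

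It then remains, exactly as in Proposition~\ref{pro:1}, to identify which edge‑rooted graphs occurring in $\bigcup_{e'} T_k(G-e')$ actually belong to $T_k(G)$ and with what multiplicities. Let $S$ be the set of distinct edge‑rooted graphs in $\bigcup_{e'} T_k(G-e')$, and for $A^a \in S$ let $n(A^a)$ be the number of $e \in E(G)$ with $G_k^e \cong A^a$. I would set up the identity
\[
  s(A^a, G) \;=\; \sum_{B^b \in S} s(A^a, B^b)\, n(B^b),
\]
whose validity rests on the observation that every edge‑rooted subgraph of $G$ isomorphic to some $A^a \in S$, with root edge $e$, is already contained in $G_k^e$. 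Indeed each $A \in S$ is isomorphic to $(G-e')_k^{b'}$ for some card $G-e'$ and some edge $b'$, and because a shortest edge‑path inside such a ball stays wholly inside the ball one has $(A^{b'})_k = A^{b'}$; hence an edge‑subgraph embedding carrying $a$ to $e$ carries any edge $f$ of $A$ to an edge within edge‑distance $\mathrm{dist}_A(a,f) \leq k$ of $e$. The left‑hand side $s(A^a,G)$ is edge‑reconstructible: by the edge analogue of the orbit formula of Section~\ref{sec-vrc} we have $s(A^a,G) = |\Orbit_A(a)|\, s(A,G)$, where $\Orbit_A(a)$ is the set of edges of $A$ similar to $a$, and $e(A) \leq e(G-e') < e(G)$, so Kelly's lemma (edge version) applies to $s(A,G)$. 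Finally, since an edge‑subgraph embedding $A \hookrightarrow B$ forces $e(A) \leq e(B)$ with equality only when $A^a \cong B^b$, ordering $S$ by decreasing edge count makes the system triangular; solving it recursively from the maximal elements (where $n(A^a) = s(A^a,G)/|\Orbit_A(a)|$) reconstructs every $n(A^a)$, hence $T_k(G)$.

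The parts that genuinely need care, and where the edge setting departs from the vertex setting of Proposition~\ref{pro:1}, are the edge‑distance bookkeeping steps: checking that $\mathrm{dist}_G(\cdot,\cdot)$ on edges is essentially line‑graph distance shifted by one, that ``the subgraph induced by the edges within edge‑distance $k$ of $e$'' is unaffected by deleting a sufficiently far edge, and that the self‑$k$‑ball property $(A^{b'})_k = A^{b'}$ of the members of $S$ survives the passage from a card $G-e'$ back to $G$. Once these are pinned down the argument is the same as for Proposition~\ref{pro:1}; the hypothesis $k>1$ seems to be needed only to make the conclusion non‑vacuous, since $G_1^e$ is always just the single edge $e$.
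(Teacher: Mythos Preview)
Your approach is essentially the paper's: show $e(G_k^e) < e(G)$ (the paper gets there more directly by first noting that every vertex of $G_k^e$ lies within vertex-distance $k$ of an endpoint of $e$, so $v(G_k^e) < v(G)$, and then invoking connectedness, rather than via your edge-distance triangle inequality), deduce $T_k(G) \subseteq \bigcup_{e} T_k(G-e)$, and solve the same triangular system in decreasing edge order. One small slip: for a maximal $A^a$ the recursion gives $n(A^a) = s(A^a,G)$ because $s(A^a,A^a)=1$, not $s(A^a,G)/\lvert\Orbit_A(a)\rvert$.
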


\begin{proof} Let $e \in E(G)$, and let $v$ be an end vertex of $e$. The
  distance between $v$ and any other vertex in $G^e_k$ is at most $k$. Thus, if
  the radius of $G$ is more than $k$ then every element in $T_k(G)$ has fewer
  than $v(G)$ vertices. Since $G$ is connected, we have $e(G^e_k) <
  e(G)$. Hence, $T_k(G)$ is a subset of $\bigcup_{e\in E(G)}T_k(G-e)$. Let $T$
  be the set of distinct elements in $\bigcup_{e\in E(G)}T_k(G-e)$. Now, we will
  determine which members of $T$ are in $T_k(G)$ along with their
  multiplicities.

  Let $A^e\in T$ . Let $s(A^e,G)$ denote the number of edge-rooted subgraphs of
  $G$ that are isomorphic to $A^e$. We have $s(A^e,G)=|\Orbit_A(e)|s(A,G)$,
  where $\Orbit_A(e)$ is the set of edges of $A$ that are similar to $e$. Let
  $m(A^e)$ be the number of edges $f\in E(G)$ such that $G_{k}^f\cong A^e$. We
  have
\begin{equation}\label{eq:b}
    s(A^e,G)=\displaystyle\sum_{B^f\in T}s(A^e,B^f)m(B^f).
\end{equation}

The term on the left-hand side of Equation~\eqref{eq:b} is edge
reconstructible. If $A^e$ is a maximal element in $T$, then $s(A^e,
G)=m(A^e)$. We order graphs in $T$ as $A_1^{e_1}, A_2^{e_2},\ldots$ so that
$e(A_i)\geq e(A_j)$ for $i<j$. We can then solve Equation~\eqref{eq:b}
recursively for each $A_i^{e_i}$ in the order $i=1,2,\ldots$.
\end{proof}

\begin{rem}
  Given $e\in E(G)$, we have $r(G-e)\geq r(G)$. We say that $G$ is
  \textit{radius-minimal} if for every edge $e$ of $G$ we have $r(G-e)>r(G)$. A
  connected graph $G$ is radius-minimal if and only if $G$ is a tree (see
  Walikar \cite{walikar}). Thus, $r(G)=\displaystyle\min_{e\in E(G)}r(G-e)$ when
  $G$ is not a tree. Hence, $r(G)$ is edge reconstructible.
\end{rem}

\section{Calculations for rooted graphs with a small number of
  vertices}\label{sec:calculations}

The following calculations for small graphs illustrate the difficulties in
counting the number of (induced) rooted subgraphs.

Let $F^x$ and $G^v$ be rooted graphs.  If $F^x\in\{\graa,\gree,\grff,\grqq\}$,
where the root vertex $x$ is marked by a bold dot, then the parameters
$i(F^x,G^v)$ and $s(F^x,G^v)$ are reconstructible (since the underlying unrooted
graph $F$ is vertex transitive). We obtain the following equations for rooted
graphs with a small number of vertices. Let $d_v(G)$ denote the degree of $v$ in
$G$.

\begin{align}
  i(\graa,G^v)&=i(\graab,G^v)=d_v(G)\label{1} \\
  i(\grcc, G^v)+i(\gree,G^v)&= s(\grcc,G^v)=\binom{d_v(G)}{2} \label{2} \\
  s(\grtt,G^v)&=\binom{d_v(G)}{3} = i(\grtt, G^v) +i(\grll,G^v) + i(\grhh, G^v)+i(\grqq, G^v) \label{3} \\ 
  s(\grll,G^v)&=i(\gree,G^v)(d_v(G)-2)= i(\grll,G^v)+2 i(\grhh, G^v)+ 3i(\grqq, G^v) \label{4} \\
  s(\grtt,G^v) &= i(\grtt,G^v)+i(\grll,G^v) + i(\grhh,G^v)+i(\grqq,G^v) \label{5a} \\
  s(\gruu,G^v) & = i(\gruu, G^v)+i(\grkk, G^v)+i(\grmm, G^v)+i(\grhh,G^v)+2i(\grgg, G^v) \nonumber\\ 
              & \quad + 3i(\grqq,G^v)\label{5b} \\
  s(\grpp,G^v)&=s(\grbb,G^v)(d_v(G)-1) - 2i(\gree,G^v)\nonumber \\ 
              &=i(\grpp,G^v)+2i(\grff,G^v)+i(\grkk,G^v)+2i(\grgg,G^v)  \nonumber\\
              &\quad + 4i(\grhh,G^v)+ 6i(\grqq,G^v)+2i(\grll,G^v)\label{6} \\ 
  s(\groo,G^v)& = i(\groo,G^v)+2i(\grff,G^v)+2i(\grmm,G^v) \nonumber \\
              & \quad + i(\grkk,G^v)+2i(\grhh,G^v) +4i(\grgg,G^v)+6i(\grqq,G^v)  \label{7}
\end{align}

By Equation~\eqref{2}, $i(\grcc,G^v)$ and $s(\grcc,G^v)$ are reconstructible,
hence by Equation~\eqref{eq:i}, $i(\grbb,G^v)$ and $s(\grbb,G^v)$ are
reconstructible. Thus, for all $F^x$ with $v(F)\leq 3$ and $G^v$ with
$v(G)\geq4$, we can calculate $i(F^x,G^v)$ and $s(F^x,G^v)$ from the deck of
$G$.

For rooted graphs with four vertices, we have already given an example where the
parameters $i(\grmm,G^v)$ and $i(\groo,G^v)$ are not reconstructible. By
Equation~\eqref{4}, we can calculate $s(\grll,G^v)$, thus
$s(\grkk, G^v)+s(\grmm, G^v)$ is reconstructible.  But we do not know how to
calculate $i(\grll,G^v)$. Also, by Equations~\eqref{6} and~\eqref{eq:i},
$s(\grpp,G^v)$ and $s(\groo,G^v)$ are reconstructible. But, we have already
shown that $i(\groo,G^v)$ is not reconstructible.

The next example illustrates Proposition \ref{pro:1}.

\begin{ex}
  Consider the graph $G$ in Figure~\ref{fig:kocay} again. We calculate $S_2(G)$
  using Proposition \ref{pro:1}.  We have
  $S=\{A_1^{u_1}, A_2^{u_2}, \ldots, A_{14}^{u_{14}}\}$, where the elements of
  $S$ are the following rooted graphs, respectively,
  \[
    \griinb,\griinc,\griind,\griine,
    \griinf,\griing,\griinh,\griini,\griinj,\griink,\griinl,\griino,\griinm,\griinn.
  \]
  Now we use Equation~\eqref{eq:a} recursively:
  \[
    n(A_1^{u_1})=s(A_1^{u_1},G)=1
  \]
  \[
    n(A_2^{u_2})=s(A_2^{u_2},G)-s(A_2^{u_2},A_1^{u_1})n(A_1^{u_1})=2-1\cdot 1=1
  \]
  \[
    n(A_3^{u_3})=s(A_3^{u_3},G)-s(A_3^{u_3},A_1^{u_1})n(A_1^{u_1})-s(A_3^{u_3},A_2^{u_2})n(A_2^{u_2})=5-3\cdot1-2\cdot1=0.
\]

\[
  \vdots
\]

\[
  n(A_{14}^{u_{14}})=s(A_{14}^{u_{14}},G)-\displaystyle\sum_{i=1}^{13}
  s(A_{14}^{u_{14}},A^{u_i}_i)n(A^{u_i}_i)=0.
\]
From the above equations we obtain
\[
  S_2(G)=\left\{\griinb,\griinc,\griine,\griing,\griinh,\griini,\griini,\griinm\right\}.
\]
\end{ex}

\section{Acknowledgements} 
\label{sec:acknowledgements}
The first author was supported by a doctoral scholarship from CAPES, Brasil. The
second author is supported by FAPEMIG (Fundação de Amparo à Pesquisa do Estado
de Minas Gerais, Brasil) (2023-2025), Process no.  APQ-02018-22. We would like
to thank our respective funding agencies.

\end{document}